\newcommand{\sign}{\operatorname{sign}}
\newcommand{\sgn}{\operatorname{sgn}}
\newtheorem{proposition}{Proposition}
\newtheorem{remark}{Remark} 
\newtheorem{theorem}{Theorem}
\title{\LARGE \bf
Multivariable Extremum Seeking Control for Dynamic Maps through Sliding Modes and Periodic Switching Function
}
\author{Nerito Oliveira Aminde$^{1}$,  Tiago Roux Oliveira$^{2}$ and Liu Hsu$^{3}$
\thanks{This study was financed in part by the Coordenação de Aperfeiçoamento de
Pessoal de Nível Superior - Brasil (CAPES) Finance Code 001; Conselho Nacional de Desenvolvimento Científico e Tecnológico, CNPq; Fundação de Amparo à Pesquisa do Estado do Rio de Janeiro, FAPERJ;}
\thanks{$^{1}$N. O. Aminde is with Federal Center for Technological
Education "Celso Suckow da Fonseca"
(CEFET/RJ), Angra dos Reis, Rio de Janeiro
– RJ, 23953-030, Brazil
        {\tt\small nerito.aminde@cefet-rj.br}}%
\thanks{$^{2}$T. R. Oliveira is with Department of Electronics and Telecommunication Engineering, State University of Rio de Janeiro (UERJ), Brazil
        {\tt\small tiagoroux@uerj.br}}%
 \thanks{$^{3}$L. Hsu is with Department of Electrical Engineering, Federal University of Rio de Janeiro (COPPE/UFRJ), Rio de Janeiro, Brazil
        {\tt\small lhsu@coppe.ufrj.br}}%
}
\begin{document}

\maketitle
\thispagestyle{empty}
\pagestyle{empty}


\begin{abstract}

  This paper presents the design of an extremum seeking controller based on sliding modes and cyclic search for real-time optimization of non-linear multivariable dynamic systems. These systems have arbitrary relative degree, compensated by the technique of time-scaling. The resulting approach guarantees global convergence of the system output to a small neighborhood of the optimum point. To corroborate with the theoretical results, numerical simulations are presented considering a system with two inputs and one output, which rapidly converges to the optimal parameters of the objective function.

\vspace{0.2cm}
\textit{Keywords} --  Extremum Seeking; Sliding-mode Control; Multivariable Dynamic Systems; Cyclic Search; Time-scaling.
\end{abstract}

\section{INTRODUCTION}

Extremum seeking control is a form of adaptive control whose objective is to find input from a control system that maximizes or minimizes its output in real-time  \cite{AK:03}. Since its emergence in 1922, it has already been applied to ABS brakes, mobile robots, vehicles and even particle accelerators \cite{NMMM:10}. Recently, the scope of its applications has been expanded, such as ESC analysis in the presence of delays \cite{OK:22}, implemented in neuromuscular stimulation problems \cite{POPF2020}, non-cooperative games \cite{ORKBEK:19,Basar_heat,Basar_heterogeneous}, biological reactors \cite{RPFT:19,OFKK2020} and traffic control for urban mobility \cite{YSOK:2020}. 

Most of the publications in extemum seeking control concentrates on single-input-single-ouput (SISO) systems \cite{Emilia_TAC,Damir_LCSS,Newton_TDS,Damir_EJC,PDE_cascades_SCL,RAD_PDE_ALCOS}. However, many of the real-life problems and applications that require optimization involve multivariable systems \cite{AK:03}. Therefore, in recent years, several techniques of multivariable control systems were proposed \cite{GKN:12,XLS:14,TM:17}.

In general, there are several methodologies available to perform extremum seeking control. One of the most recognized approaches is based on periodic excitation signals or dithers  \cite{KW:00}. Another strategy involves extremum seeking control through monitoring functions and sliding modes \cite{AOH:13,AOH:14}, where the monitoring function is used to address the lack of knowledge of the control direction.

Recently, a multivariable extremum seeking control based on periodic switching function and sliding modes was proposed in \cite{SO:18}, in which the concept of cyclic search was introduced. Basically, this approach reduces a multivariable problem in scalar sub-problems. In \cite{AOH:21}, we expand this formulation to consider the monitoring function for static objective  function \cite{AOH:20} and dynamic linear systems \cite{AOH:14}, respectively.

In this paper, we propose a multivariable extremum seeking controller based on periodic switching function \cite{TAC_Guo} and sliding modes for dynamic maps with arbitrary relative degrees, which for simplicity are considered stable linear dynamic systems in cascade with a nonlinear map. The relative degree is mitigated through time-scaling \cite{MED_2017}, which also allows the analysis and design of controllers by extremum seeking regardless of the order or relative degree of the model and exact knowledge of its parameters \cite{AOH:14}.

\section{Problem Statement}
\label{formulaprob}

Consider the following uncertain linear subsystem with arbitrary relative degree $n^*$:
\begin{align}
    \dot{v}&=u,\label{sistinteg} \\
    \dot{x}&=Ax+Bv,\label{dinamicalinear} \\
          z&=Cx \label{subsist1}
\end{align}
in cascade to the static system
\begin{align}
    y=h(z),\label{saidamensura}
\end{align}
where $u\in \mathbb{R}^m$ is the control input, $x\in \mathbb{R}^n$ is the state vector, $z\in \mathbb{R}^n$ is the unmeasured output of the subsystem (\ref{sistinteg})-(\ref{subsist1}) and $y\in \mathbb{R}$ is the measured output (\ref{saidamensura}). 
 
The integrator in (\ref{sistinteg}) is used to obtain a virtual control signal $v \in \mathbb{R}^n$, which increases the relative degree of the system \cite{L:03}, i.e., $n \geq n^*-1$ instead of $n>n^*$. The increase in the relative degree causes the high-frequency switching to be retained only in the control signal $u$, while the  virtual control $v$ that triggers the plant is totally continuous, which allows attenuating the chattering \cite{Uetal:99} in the closed-loop system.

The matrices $A \in \mathbb{R}^{n \times n}$, $B n \mathbb{R}^{n \times m}$, $C \in \mathbb{R}^{n \times n}$ are uncertain, the subsystem of order $n$ and consequent relative degree is assumed unknown. In order to ensure the existence and uniqueness of solutions, it is assumed that the uncertain nonlinear map $h: \mathbb{R}^n \to \mathbb{R}$ to be optimized (maximized or minimized) is locally Lipschitz continuous and sufficiently smooth. It is also assumed that the initial instant is $t=0$~s. For each solution of (\ref{sistinteg})-(\ref{saidamensura}), there is a maximum time interval of definition given by $[0,t_M)$, where $t_M$ can be finite or infinite.

\subsection{Control objective}

The control objective is to design a control law $u$ through output feedback that takes the system to the extremum point of the objective function $y=h(z)$ in (\ref{saidamensura}), without losing generality and starting from any initial conditions, keeping it as close as possible to this point. The objective function is assumed to have only a maximum, denoted by $y^*=h(z^*)$. This challenge can be conceived under the extremum seeking of a control system, where $y$ represents the output of the objective function, and $v$ is interpreted as the output of an integrator, whose input is determined by a $u$ vector of control signals to be designed.

Furthermore, the same problem can be redefined as a trajectory tracking problem, in which the control direction is unknown \cite{POL:12}.

\subsection{Analysis by Singular Perturbation}
\label{perturbsingular}

In \cite{AOH:22}, the multivariable extremum seeking control was designed via sliding modes, periodic switching function and cyclic search for optimization problems in real-time, but for static maps. Here it is intended to show that the results obtained can be extended to dynamic maps.

To ensure this generalization, initially consider a simple integrator system with a nonlinear static map

\begin{align}
     \dot v &= u\,,                 \label{plant_inverse3}\\ 
       y&=h(v)\,,              \label{saidameasured3}
\end{align}
which can be effectively controlled using extremum seeking control via periodic switching function.

By considering  singular perturbation method
 \cite{KKO:1999}, it can be shown that extremum seeking control via periodic switching and cyclic search \cite{AOH:22} is robust to fast unmodeled dynamics such that the disturbed system
(\ref{plant_inverse3})-(\ref{saidameasured3}) can be rewritten in sensor block form \cite[p.
50]{KKO:1999}:
\begin{eqnarray}             
    \dot v &=& u \,,                               \label{fast_eq0}\\
    \eta\dot{x}&=& Ax+Bv \,,     \label{fast_eq1}  \\
    y &=& h(Cx) \,,  \label{fast_outupunmeasured}
\end{eqnarray}
satisfying the inequality
\begin{eqnarray}             
    |y-y^*| \leq \mathcal{O}(\sqrt{\eta}+\varepsilon)\,,    \label{residual_set}
\end{eqnarray}
where
$\eta>0$ is a constant sufficiently small. The demonstration of (\ref{residual_set}) follows the same steps as in \cite{CH:1991,CH:1992,AOH:22}, considering $y^*$ as a setpoint.

\subsection{Redesign of the controller via time-scaling }

Using a suitable time-scale \cite{AOH:14} 
\begin{equation}
  \frac{dt}{d\tau} = \eta\,,                                              \label{time-scaling}\\
\end{equation}
one can rewrite the system (\ref{fast_eq0})--(\ref{fast_outupunmeasured}) as
\begin{eqnarray}
  v' &=& \eta u                               \label{newsis}\\
  x' &=& A x + Bv\,,                                              \label{plant_inverse2new}\\
  z &=& C x \,,                     \label{plant_extern2new}\\
  y&=& h(z)\,,                                \label{saidameasurednew}
  \end{eqnarray}
where $v':=\frac{d v}{d \tau}$ e $x':=\frac{d x}{d \tau}$.

This means that $\exists \eta^*>0$ such that de control input $u$ can be scaled in (\ref{newsis}) to control the original system
(\ref{dinamicalinear})--(\ref{saidamensura})
in a different time-scale dilated by $t=\eta\tau$,
$\forall \eta \in (0,\eta^*]$.

The practical meaning is that, since the periodic switching function-based extremum seeking control, initially proposed for systems of relative degree one, is robust to unmodeled dynamics that are stable and fast when ($ \eta \to 0$) then it can be effective to control dynamics systems of any relative degree as long as they are properly staggered. As a predictable consequence, the cost to consider is that the closed loop response slows down as $\eta \to 0$.

As mentioned earlier, only the output is considered measurable, while the linear output of the subsystem $z$ and the state $x$ are not available for feedback. To achieve the expected results, it is necessary to assume and delimit some hypotheses presented below, considering the controlled plant (\ref{sistinteg})-(\ref{saidamensura}), or equivalently (\ref{newsis})-(\ref{saidameasurednew}):

\label{hypotesis}
$\bf{(H1)}$ (\emph{About the uncertainties}): All uncertain plant parameters belong to a compact set $\Omega$.

\label{hypotesis2}
$\bf{(H2)}$ (\emph{Differentiability of $h$}): The nonlinear function $h:\mathbb{R}^n \rightarrow \mathbb{R}$ is  locally Lipschitz continuous in $x$ and sufficiently smooth, i.e., $h$ is continuously  differentiable over all domain $\mathbb{R}^n$.

\label{hypotesis3}
$\bf{(H3)}$ (\emph{About the linear subsystem}): the matrix $A$ in (\ref{dinamicalinear}) must have its eigenvalues in the left-half plane, i.e., it must be stable.

\label{hypotesis4}
${\textbf{(H4)}}$ (\emph{Unique maximum in $h(z)$}):  
It is assumed that there is $z^*\in\mathbb{R}^n$ such that $y^*=h(z^*)$ is the only maximum of $h(z)$, where the gradient and hessian matrices satisfy, respectively:
\begin{align*}
\frac{\partial h}{\partial z}\bigg\rvert_{z=z^*}=0\;\; \text{and}\;\;\frac{\partial^2 h}{\partial z^2}\bigg\rvert_{z=z^*}<0, \;  \end{align*}
where $z\in\mathbb{R}^n$.

\label{hypotesis5}
${\textbf{(H5)}}$ (\emph{Radial unboundedness of $h$}): Assume that the function $h:\mathbb{R}^n \rightarrow \mathbb{R}$ is radially unbounded in $\mathbb{R}^n$. This guaranties that if $|y|$ is bounded, then $\|x\|$ must be bounded.

Hypothesis H1 is fundamental and crucial in nonlinear systems. Hypotheses H2, H3 and H5 are required to ensure no finite-time escape in the closed-loop system, while hypothesis H4 demonstrates maximum properties of a nonlinear function, being vital in optimization problems.

\section{Multivariable extremum seeking controller via periodic switching function} \label{section3}

Figure \ref{diagram_dynamic} represents the diagram of the proposed multivariable extremum seeking control scheme, which uses sliding modes and output feedback with a periodic switching function and cyclic search.

\begin{figure}[!htb]
\begin{center}
\includegraphics[width=.47\textwidth]{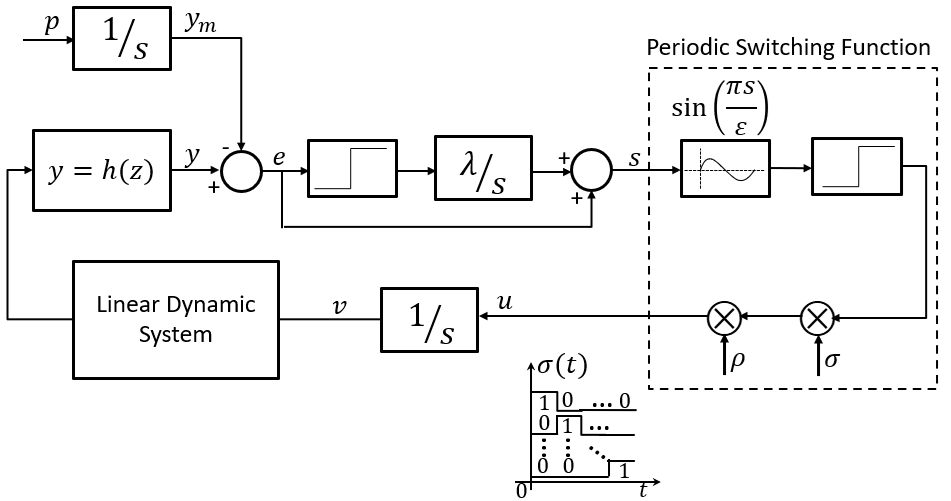}
\caption{Multivariable extremum seeking control via periodic switching function and cyclic search for dynamic maps.}
\label{diagram_dynamic}
\end{center}
\end{figure}
The control law is given by 
 \begin{equation}
u(t)\!=\!\rho(t)\sigma(t)\!\sgn\left(\sin\left[\frac{\pi}{\varepsilon}
s(t) \right] \right)\,,         \label{eqdrakunov}
\end{equation}
such that
\begin{equation}
s(t) = e(t) + \lambda \int_0^t \sgn(e(\tau))d\tau                            \label{smcerror}
\end{equation}
where $\sigma(t)$ is the cyclic search function and $\rho(t)$ is the modulation function, both functions to be defined in sections \ref{cyclicfunction} and \ref{funcaodemodulacao}, respectively, and $\lambda, \varepsilon >0$ are appropriate constants.

 The reference tracking error $e(t)$ is defined by
\begin{equation}
e(t) = y(t) - y_m(t)\,,                 \label{erro}
\end{equation}
where $y_m>0$ is a monotonically increasing ramp as a function of time, generated through the following reference trajectory
\begin{equation}
\dot{y}_m = p,  \quad y_m(0)=p_0 \,,                                               \label{modref}   
\end{equation}
where $p>0$ is design constant. To avoid an unlimited reference signal, $y_m(t)$ can be saturated in the controller by an upper bound of $y^*$, without affecting its performance.

The modulation function $\rho(t)$ will be designed so that $y(t)$ can track the ramp $y_m(t)$, $\forall t$, until the directional or global extremum point is reached. Thus, $y$ is oriented to reach the neighborhood of the directional or global maximum $y^*=h(x^*)$ and to remain close to the ideal value. For this, a new modulation function $\rho(t)$ is suggested, such that the sliding mode $\dot{s}=0$ occurs in finite time on one of the surfaces $s=k\varepsilon$, for some $k$ integer.

From (\ref{smcerror}), one has
\begin{align}
\dot{s}=\dot{e}+\lambda \sgn(e)=0.
\end{align}

Thus, it is ensured that the error $e$ tends to zero, that is, $y=h(z)$ tracks $y_m$ (and consequently, $y$ must approach the directional or global extremum $y^*$) while $y$ remains outside the neighborhood of $y^*$, where the high-frequency gain is non-zero. In contrast, once $y$ approaches $y^*$, the high-frequency gain approaches zero, which results in loss of controllability. This way, tracking of $y_m$ is stopped. However, the neighborhood of the extremum point is reached as desired. The control strategy ensures that $y$ remains close to $y^*$, $\forall t$. Results and demonstrations of the scalar case can be found in \cite{POL:12}.

\subsection{Design of the cyclic directional search}
\label{cyclicfunction}

 The cyclic directional search is designed as in \cite{AOH:20,SO:18}, so that the change of the search direction takes place  periodically. Let $\sigma(t)$ be a periodic function with period $T_s$. Let the  interval $T=[T_l, T_u)$  represent the interval of one cycle so that, $T_s= T_u - T_l$ and let the set of initial instants of each directional search be denoted as  $\tau_1, \tau_2,\cdots,\tau_n$. Let $\Delta \tau_i=[\tau_i, \tau_{i+1}), \forall i=1,...,n$, be the $n$ directional search sub-intervals within each cycle.
Let the orthonormal basis be denoted by $a_1,a_2,\cdots,a_n \in \mathbb{R}^n$, so that $a^T_i=[0,\cdots 0, 1,0, \cdots 0]$, with the  unit element at the $i$th position of the vector. Thus, the cyclic search direction in (\ref{eqdrakunov}) can be defined as follows:
\begin{align}
      \sigma(t)=a_i, \quad \forall t \in \Delta \tau_i,\quad \forall i=1,...,n. 
      \label{dirbusca}
\end{align}

For simplicity, we choose equal duration $T_s/n$ for each sub-interval $\Delta\tau_i$. During the $i$th  sub-interval, the controller will search in the $i$th direction before switching to the next direction in the next sub-interval (\cite{SO:18}). That is, the multivariable cyclic controller works as a scalar controller in each sub-interval $\Delta \tau_i$. 

Since the search is cyclic with period $T_s$, to each cycle is attributed an index $\kappa=1,2,\ldots,\infty$. For the $\kappa$th cycle, a directional extremum can occur for each $i$th search direction within the cycle. Such extremum is denoted $y^*(\kappa)=h(x_i^*(\kappa))$, where $x_i^*(\kappa)\in \mathbb{R}^n$ is an extremum point along the $i$th search direction of the $\kappa$th cycle. When the system approaches the global extremum or some directional extremum, the controllability is ``lost'' in the sense that the control gains become too low to ensure the desired output tracking by sliding mode. Thus, the following concepts and hypothesis are introduced.

\label{hypotesis6}
${\textbf{(H6)}}$ (\emph{Low controllability regions $\mathcal{D}_{\Delta}$ and $\mathcal{D}(\kappa)_{\Delta_i}$}):  

Let $\mathcal{D}_{\Delta}:=\{x:\|x-x^*\|<\Delta/2\}$ and, for each $i$-th directional search $\kappa$-th cycle, indexed by $\kappa_i$,  $\mathcal{D}(\kappa_i)_{\Delta_i}:=
 \{x:\|x_i(\kappa_i)-x^*_i(\kappa_i)\|<\Delta_i/2,\quad  x_j(\kappa_i)=\text{constant},\ j\neq i\}$. Such regions will be referred to as $\Delta$-vicinity, for simplicity.
 
 Then, assume that there exists a 
 positive function $L_h(\cdot)$, bounded away from zero, such that, for any given s.s. constants $\Delta>0$ and $\Delta_i>0$, 
\begin{align}
0<\underline{k}_{p}\leq L_h({\Delta})\leq &\left\|\frac{\partial h}{\partial x}\right\|, \;
\forall x \notin \mathcal{D}_{\Delta},\; \text{and} \notag\\ 0<\underline{k}_{pi} \leq L_h({\Delta_{iJ}})\leq& \left|\frac{\partial h}{\partial x_i}\right|,\;\forall x_i \notin \mathcal{D}(\kappa)_{\Delta_{iJ}}. \;  \notag \end{align}
From the continuity, assumption (\textbf{H1}), $L_h({\Delta})$ and $L_h({\Delta_{iJ}})$ tend to zero as $\Delta$ and $\Delta_{iJ}$ tend to zero. 
Note that the $\Delta$'s can be chosen arbitrarily small if $L_h$ is allowed to be correspondingly small, due to Assumption (\textbf{H1}). However, smaller $L_h$ will demand higher control gain. For simplicity, one can choose $\Delta=\Delta_i$. 

It is convenient to relate the parameter $\Delta$ or $\Delta_i$ with the small parameter $\varepsilon$ so that $||x-x^*||<\sqrt{\varepsilon}$ in $\mathcal{D}_{ \Delta}$ and $ |x-x_i^*(\kappa_i)|\leq \sqrt{\varepsilon}$ in the $i$-th directional domain $\mathcal{D}(\kappa_i)_{\Delta_i}$ , as a consequence of the equation (\ref{fast_outupunmeasured}) \cite{AOH:22}, for static map, $\eta=0$.

In what follows, we drop the periodic search cycle index $\kappa$ to avoid clutter, by considering the analysis takes place within a generic cycle.

\subsection{The singular case  \texorpdfstring{$\eta=0$}{Lg}}

In this case, the differential equation (\ref{fast_eq1}) is replaced by the algebraic equation $x=-A^{-1}Bv$ and, from (\ref{fast_eq0}) and (\ref{fast_outupunmeasured}),
the first derivative of the output $y$ with respect to time is given by
\begin{align}
\dot{y} ={\frac{\partial h}{\partial z}}^Tu
\end{align}
where the high frequency gain is given by the gradient vector, i.e.,
\begin{align}
k_p(z):=\left[k_{p_1} \;\; \cdots \;\; k_{p_n}\right],\;\;\text{e}\quad k_{p_i}(z):={\frac{\partial h}{\partial z_i}CA^{-1}B}. \label{kpx}
\end{align}

As in \cite{POL:12}, the signs associated with the elements $k_{pi}:= \frac{\partial h}{\partial z_i}$ of $k_p$ can be interpreted as control directions. Hypothesis H6 allows the consideration of a non-linear control system with state-dependent high-frequency gain, which changes signal around the optimum point continuously.

From (\ref{kpx}) and hypothesis H6, $k_p$ $(\forall z \notin \mathcal{D}_{\Delta}$), and $k_{p_i}$ $(\forall z \notin \mathcal{D}_{\Delta_{\kappa_i}}$) satisfy 
\begin{align}
0<\underline{k}_{p}\leq \left \|k_{p} \right \|, \ |k_{p_i}| \, ;                                \label{kpbarneras2}
\end{align}
where the lower bound $\underline{k}_{p} \leq  L_h|CA^{-1}B|$ is a constant, considering all permissible uncertainties in $h(.)$, $A$, $B$ e $C$.

\subsection{Design of the modulation function for \texorpdfstring{$\eta=0$}{Lg}}
\label{funcaodemodulacao}

From (\ref{newsis}), (\ref{erro}) and (\ref{modref}), the time derivative of the sliding manifold $s(t)$ (hiding $t$) one has:
\begin{align}
\dot{s}=\sum_{i=1}^n \frac{\partial h}{\partial x_i}u_i-p+\lambda \sign(e)\,,
\end{align}
and for the $i$-th search direction,
\begin{align}
\dot{s}= k_{p_i}(x)(u_i+d_s)\,,  \label{edoerro} 
\end{align}
where
\begin{align}
d_s:=(k_{p_i}(x))^{-1}\left(-p + {\lambda \sgn(e)}\right) \,.         \label{moduerro}
\end{align}

Suppose we start at $t=\tau_i$, that is, at the beginning of the $i$-th search direction, we have controllability error, with $k_{p_i}\geq L_h(\Delta_i)$, to be considered as \textit{controllability condition}
  (see (\textbf{H6})). Considering $d_s$ as a disturbance, it can be increased in absolute value by:
\begin{align}
\bar{d}_s:=L^{-1}_h \left(p + {\lambda}\right) \geq |d_s| \,.         \label{debar}
\end{align}
which allows us to define the modulation function $\rho$ as being
\begin{align}
\rho = \bar{d}_s+\gamma,\;\;
\label{funcmodgen} 
\end{align}
with $\gamma>0$ being an arbitrarily small positive constant. This approach relies on the results obtained and published in \cite{AOH:22}.

\subsection{Modulation function design for \texorpdfstring{$\eta\ne 0$}{Lg}}

For $\eta \ne 0$ in (\ref{fast_eq1}), the time scaling in
(\ref{time-scaling}) allows the original plant to be considered
(\ref{sistinteg})–(\ref{saidamensura}), on a different time-scale,
controlled by the controller (\ref{eqdrakunov}) properly
scaled by $\eta u$ in (\ref{newsis}). In the sense
to incorporate it, the modulation function must be
redesigned to satisfy
\begin{align}
    \rho \geq \eta[|d_s|+\gamma] \label{funcmodgen2}
\end{align}
instead of (\ref{funcmodgen}).

Analyzing the singular perturbation method presented in Section \ref{perturbsingular}, if (\ref{eqdrakunov}) were used again, a limiting
upper bound for the tracking error $e(t)$ could be obtained directly, for $\eta$ sufficiently
small.

\begin{proposition}
 \label{prop:01}
Consider the systems (\ref{sistinteg})--(\ref{saidamensura}),  search direction (\ref{dirbusca}), reference trajectory (\ref{modref}) and control law (\ref{eqdrakunov}). Outside the regions $\mathcal{D}_{\Delta}$ and $\mathcal{D}_{\Delta_i}$, if the modulation function $\rho$ in (\ref{eqdrakunov}) is designed as
\begin{align}
    \rho:=\frac{\eta}{L_h}[p+\lambda]+\eta \gamma\,, \label{funcmodgen3}
\end{align}
satisfying (\ref{funcmodgen2}), while $z \notin \mathcal{D}_{\Delta_i}$, one has:  \textbf{(a)} the sliding mode $s=k\varepsilon$ is reached in regardless of the control and search direction, and \textbf{(b)} there is no escape in finite time ($t_{M} \to +\infty$).

\begin{proof}
Considering the singular perturbation argument and time scaling (\ref{time-scaling}), which shows that the systems (\ref{fast_eq0})-(\ref{fast_outupunmeasured}) (\ref{newsis})-(\ref{saidameasurednew}) are equivalent for $\eta$ sufficiently small, so the demonstration for the original plant (\ref{sistinteg})-(\ref{saidamensura}) follows the same steps as those presented in the proof in \cite[Proposition 1]{AOH:22}, for the case of relative degree one. $\hfill$
\end{proof}

\begin{remark} \label{obs}
Considering that the project is carried out considering a slow time-scale $\eta t$, it is natural that the reference trajectory parameter $p$ is rescaled appropriately, i.e., $\eta p$.
\end{remark}

\end{proposition}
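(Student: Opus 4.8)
The plan is to reduce the multivariable, higher-relative-degree problem to a family of scalar, relative-degree-one sliding-mode problems — one per search direction — and then to establish, for each such scalar problem, (a) finite-time attractivity of some switching surface $s=k\varepsilon$ and (b) forward completeness of the closed loop. The bridge is the time-scaling (\ref{time-scaling}) together with the singular-perturbation equivalence of Section \ref{perturbsingular}: for $\eta$ sufficiently small the boundary layer $\eta\dot x=Ax+Bv$ relaxes, by (\textbf{H3}), to the quasi-steady state $x=-A^{-1}Bv$, so that on the slow manifold the map from $u$ to $y$ is exactly the relative-degree-one map treated in the singular case $\eta=0$, with high-frequency gain $k_{p_i}$ as in (\ref{kpx}). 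I would first fix a generic cycle and a generic sub-interval $\Delta\tau_i$, on which $\sigma(t)=a_i$ and the dynamics collapse to the scalar equation (\ref{edoerro}).

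For part (a) I would analyze the reaching phase directly from (\ref{edoerro}), $\dot s=k_{p_i}(x)(u_i+d_s)$, with $u_i=\rho\,\sgn(\sin[\tfrac{\pi}{\varepsilon}s])$. The key observation is that $\sgn(\sin[\tfrac{\pi}{\varepsilon}s])$ has a simple sign change across each surface $s=k\varepsilon$, so its two one-sided limits there are opposite; hence, for whichever sign $k_{p_i}$ happens to take outside the $\Delta$-vicinity (this is where the unknown control direction is absorbed), exactly the alternate family of surfaces $s=k\varepsilon$ is attractive. The modulation choice (\ref{funcmodgen3}) is designed precisely so that $\rho$ dominates the disturbance bound $\bar d_s$ of (\ref{debar}), i.e. $\rho\ge\eta(|d_s|+\gamma)$ as in (\ref{funcmodgen2}). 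Substituting into (\ref{edoerro}) and using the controllability lower bound $|k_{p_i}|\ge L_h$ from (\textbf{H6}) and (\ref{kpbarneras2}), I would show that, within the band containing the state, $\dot s$ keeps a constant sign satisfying the reaching condition $(s-k\varepsilon)\dot s<0$ toward the attractive endpoint, with $|\dot s|$ bounded below by a strictly positive constant proportional to $\gamma$. Since the distance to that endpoint is at most $\varepsilon$, integration yields finite-time attainment of $s=k\varepsilon$, independently of the search direction and of $\sgn(k_{p_i})$, which is claim (a).

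For part (b) I would argue forward completeness from the structure of the loop. The control (\ref{eqdrakunov}) is uniformly bounded, $\|u\|\le\rho$ with $\rho$ a fixed constant; hence by (\ref{sistinteg}) $v$ grows at most linearly and cannot escape on $[0,t_M)$. The linear subsystem (\ref{dinamicalinear}) is then a stable ($A$ Hurwitz, (\textbf{H3})) system driven by a continuous, at-most-linearly-growing input $v$, so $x$ is bounded on every finite interval, and by local Lipschitz continuity of $h$ in (\textbf{H2}) the output $y=h(Cx)$ stays finite. Conversely, radial unboundedness (\textbf{H5}) rules out the remaining escape mode, since boundedness of $y$ forces boundedness of $\|x\|$. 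Therefore no state diverges in finite time and $t_M\to+\infty$, giving claim (b). For $\eta\ne0$ both conclusions are transported back to the original plant through the time-scale $t=\eta\tau$ and the residual bound (\ref{residual_set}), exactly as in \cite{AOH:22}.

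The step I expect to be the main obstacle is the rigorous singular-perturbation transfer for $\eta\ne0$. Because the right-hand side of (\ref{eqdrakunov}) is discontinuous, Tikhonov's theorem does not apply verbatim; one must work with Filippov solutions (or an equivalent-control/averaging argument) and show that the boundary-layer error, which enters the effective $\dot s$ at order $\mathcal O(\sqrt\eta+\varepsilon)$ by (\ref{residual_set}), remains strictly below the margin $\gamma$ built into (\ref{funcmodgen3}); this is what makes the $\eta$-scaling of $\rho$ in (\ref{funcmodgen2})--(\ref{funcmodgen3}) the correct one and is the only place where ``$\eta$ sufficiently small'' is genuinely used. A secondary technical point is the behavior at the cycle-switching instants $\tau_i$, where $\sigma$ jumps and a directional extremum can drive $k_{p_i}\to0$; this is precisely excluded by restricting attention to $z\notin\mathcal D_{\Delta_i}$, so that the controllability lower bound invoked in part (a) remains in force throughout the reaching phase.
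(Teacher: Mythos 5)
Your proposal is correct and takes essentially the same route as the paper: the paper's proof consists precisely of the singular-perturbation/time-scaling equivalence you invoke, followed by a citation to \cite[Proposition 1]{AOH:22} for the relative-degree-one case, whose content is exactly what you reconstruct explicitly (the alternating-band attractivity of the surfaces $s=k\varepsilon$ under unknown sign of $k_{p_i}$, with $|\dot{s}|$ bounded below by a constant proportional to $L_h\gamma$, and forward completeness from the bounded control together with (\textbf{H2}), (\textbf{H3}), (\textbf{H5})). The technical caveat you flag---that the discontinuous right-hand side prevents a verbatim Tikhonov argument for $\eta\neq 0$---is also real, and is the point the paper handles only by deferring to the unmodeled-dynamics robustness results of \cite{CH:1991,CH:1992}.
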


\subsection{Global Convergence}

In this result, it is demonstrated that the multivariable controller based on sliding modes and output feedback drives $z$ towards the region $\mathcal{D}_{\Delta}$, where the unknown maximizer $z^*$ is located, as defined in \textbf{(H6)}. However, this does not guarantee that $z(t)$ stays around $\mathcal{D}_{\Delta}, \,\forall t$. Nevertheless, it is possible to generate oscillations around $y^*$ of the order $\mathcal{O}(\sqrt{\eta}+\varepsilon)$.

\begin{theorem}                                                                           \label{theorem:01}

Consider the system (\ref{sistinteg})--(\ref{saidamensura}), control law (\ref{eqdrakunov})--(\ref{smcerror}), reference trajectory (\ref{modref}) and modulation function (\ref{funcmodgen3}).
Assume that hypotheses $\bf{(H1)-(H6)}$ are satisfied, then: \textbf{(i)} the region $\mathcal{D}_{\Delta}$ in $\bf{(H4) }$ is globally attractive, achieved in finite time and \textbf{(ii)} for $L_{h}$ sufficiently small, oscillations around the maximum value $y^*$ of $y$ can be made in the order $ \mathcal{O}(\sqrt{\eta}+\varepsilon)$. Since the $y_m$ signal can be saturated at (\ref{modref}), all closed-loop signals remain uniformly bounded.
\end{theorem}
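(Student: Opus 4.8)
The plan is to build on Proposition~\ref{prop:01}, which already guarantees both finite-time attainment of a sliding surface $s=k\varepsilon$ and the absence of finite-time escape, and then to convert the closed-loop behavior into a cyclic sequence of scalar extremum-seeking problems whose composition drives $z$ into $\mathcal{D}_{\Delta}$. First I would invoke the time-scaling and singular-perturbation equivalence of Section~\ref{perturbsingular} so that it suffices to analyze the reduced system (\ref{newsis})--(\ref{saidameasurednew}), the residual effect of the fast dynamics (\ref{fast_eq1}) being accounted for through the bound (\ref{residual_set}). On each sliding surface, $\dot s=0$ yields $\dot e=-\lambda\,\sgn(e)$ from (\ref{smcerror}), which is finite-time stable, so $e\to 0$ and $y$ tracks the ramp $y_m$. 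Because $\dot y_m=p>0$, the output $y$ is forced to increase toward $y^*$ as long as the controllability condition $k_{p_i}\ge L_h(\Delta_i)$ of (\textbf{H6}) holds, i.e. while $z\notin\mathcal{D}_{\Delta_i}$.

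For part (i), I would argue directionally: within each sub-interval $\Delta\tau_i$ the controller (\ref{eqdrakunov}) acts only along $a_i$, so it behaves exactly as the scalar seeker analyzed in \cite{POL:12,AOH:22}, and the modulation (\ref{funcmodgen3}) dominates the disturbance term $d_s$ of (\ref{moduerro}) via (\ref{debar})--(\ref{funcmodgen2}), keeping the sliding mode alive until the $i$-th directional extremum is reached. Cycling through $i=1,\dots,n$ and using that $h$ has a unique maximum with negative-definite Hessian (\textbf{H4}) and is radially unbounded (\textbf{H5}), the monotone increase of $y$ cannot stall at any point other than $z^*$, so successive cycles make $z$ enter $\mathcal{D}_{\Delta}$. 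Global attractivity follows because (\textbf{H4})--(\textbf{H5}) render the sublevel sets of $h$ compact, making the argument independent of the initial condition; finiteness of the reaching time follows by adding the finite sliding-surface reaching time from Proposition~\ref{prop:01} to the time $\sim(y^*-p_0)/p$ needed for the tracked ramp to raise $y$ into the $\Delta$-vicinity over a finite number of cycles.

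For part (ii), once $z\in\mathcal{D}_{\Delta}$ the high-frequency gain $k_p$ drops below $L_h$ and controllability is lost, so ramp tracking halts and $y$ can no longer be pushed past $y^*$. The residual oscillation then has two independent contributions: the periodic switching $\sgn(\sin[\tfrac{\pi}{\varepsilon}s])$ confines $s$ to a band of width $\mathcal{O}(\varepsilon)$ between adjacent surfaces $s=k\varepsilon$, which through the relation $\|z-z^*\|<\sqrt{\varepsilon}$ recorded in (\textbf{H6}) translates into an $\mathcal{O}(\varepsilon)$ spread of $y$ about $y^*$; and the boundary-layer approximation error of the singular perturbation supplies the $\mathcal{O}(\sqrt{\eta})$ term already quantified in (\ref{residual_set}). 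Superposing them and taking $L_h$ small gives $|y-y^*|\le\mathcal{O}(\sqrt{\eta}+\varepsilon)$. Finally, saturating $y_m$ in (\ref{modref}) at an upper bound of $y^*$ keeps the reference, and therefore $e$ and $y$, bounded; radial unboundedness (\textbf{H5}) then bounds $x$ and $z=Cx$, while $|u|\le\rho$ bounds the control and hence $v$, so that all closed-loop signals remain uniformly bounded.

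The step I expect to be the main obstacle is establishing that the coordinate-wise cyclic search converges to the global maximizer rather than stalling at a spurious directional extremum, and that it does so in finite time uniformly over cycles: one must show that monotone ramp tracking, which only enforces ascent along a single coordinate at a time, cannot become trapped away from $z^*$. This is precisely where (\textbf{H4}) and (\textbf{H5}) are indispensable, and where the interplay among the discrete cyclic schedule, the finite-time sliding dynamics, and the $\mathcal{O}(\sqrt{\eta})$ perturbation from the unmodeled fast dynamics must be reconciled carefully.
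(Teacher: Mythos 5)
Your proposal follows essentially the same route as the paper: the paper's own proof of Theorem~\ref{theorem:01} simply defers to the relative-degree-one result of \cite[Theorem 1]{AOH:22} combined with the unmodeled-dynamics/singular-perturbation arguments of \cite{CH:1991,CH:1992}, and your sketch reconstructs exactly those ingredients --- Proposition~\ref{prop:01} plus the cyclic reduction to scalar seekers for part (i), and the $\mathcal{O}(\varepsilon)$ sliding band together with the $\mathcal{O}(\sqrt{\eta})$ boundary-layer error of (\ref{residual_set}) for part (ii), with boundedness from the saturated $y_m$ and (\textbf{H5}). Your write-up is in fact more explicit than the paper's citation-style proof, and the obstacle you flag (that the coordinate-wise cyclic search cannot stall away from $z^*$) is precisely the step the paper delegates to the cited reference rather than arguing directly.
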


\begin{proof}
Similar to the demonstration of Proposition 1, the demonstration of this theorem follows the steps presented in the proof in \cite[Theorem 1]{AOH:22}, for the case of relative degree one, combined with the demonstration in \cite{CH:1991,CH:1992}.  $\hfill$
\end{proof}

\section{Illustrative example}

To illustrate, let us take as an example a plant with an unknown objective function, in series with a linear dynamic system described by

\begin{align}
\dot{v}&=u\,, \label{mod1} \\
    \dot{x}&=\left[
    \begin{array}{cc}
0 & 1 \\ -4 &-2
    \end{array}\right]x+
        \left[
    \begin{array}{cc}
        1 & 0 \\ 0 & 1
    \end{array} \right]v\,,  \label{mod2}  \\
        z&=\left[
    \begin{array}{cc}
        1 &  0 \\ 0 & 1
    \end{array} \right]x  \,, \label{mod3}
\end{align}
and output function
\begin{align}
    y=h(z)=2-(z_1^2+z_2^2-2\epsilon z_1z_2)\label{fobjetivo1}
\end{align}

The static function (\ref{fobjetivo1}) consists of the particular representation of functions of the type
\begin{align} \label{funcaoconvexa}
y=h(z)=y^*+\frac{1}{2}(z-z^*)^TH(z-z^*)
\end{align}
where $$H= \left[ {\begin{array}{cc}
    2 & 2\epsilon \\
    2\epsilon & 2 \\
   \end{array} } \right]<0$$ is the negative defined Hessian matrix.

   Note that the desired parameters of the objective function (\ref{fobjetivo1}) are $z^*=(0,0)$ and $y^*=2$, for $0<\epsilon<1$, a condition for $h(z)$ has a maximum point. Note that the objective function is convex and based on (\ref{funcaoconvexa}).

   The control law (\ref{eqdrakunov})--(\ref{smcerror}) can be applied with the modulation function defined in (\ref{funcmodgen}). The following simulation parameters were chosen: $p=1$, $p_0=0$, $L_h=0.1$ $\lambda=4$ $\varepsilon=0.02$, $\gamma=0.1$ $\eta=0.01 $ and $T_s=5s$. For $\tau=0.1s$ we have $t=10s$ according to (\ref{time-scaling}).

   Figure \ref{xyconvergency} illustrates the result of the proposed multivariable extremum seeking control scheme that converges to the neighborhood of the optimal point $z^*=(0,0)$ and $y^*=2$, starting from the initial condition $z(0)=(-2,4)$. On the other hand, Figure \ref{planodefase} shows the phase portrait for the initial condition $z_0=(-2,4)$ and respective convergence to the equilibrium point, corroborating the illustration in Figure \ref{xyconvergency}.

\begin{figure}[!htb]
\begin{center}
\includegraphics[width=.43\textwidth]{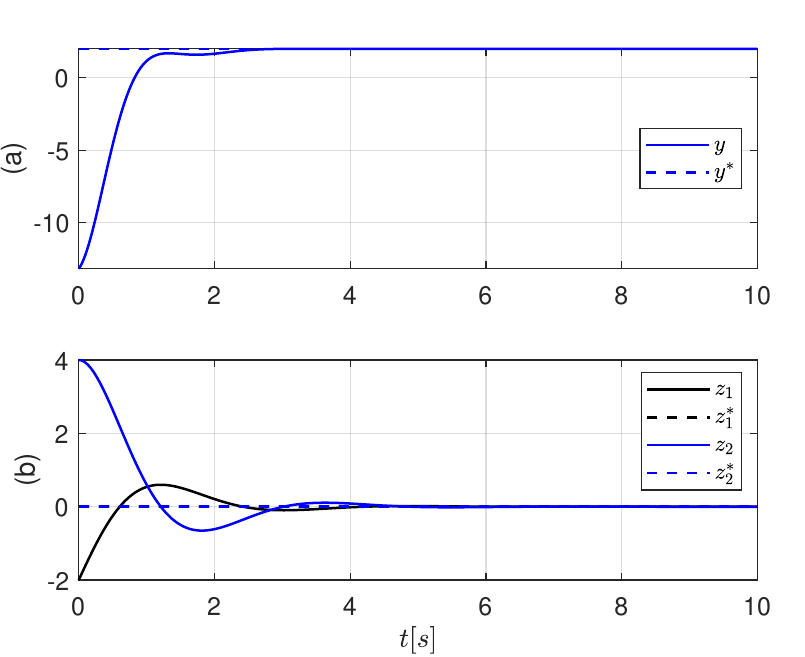}
\caption{Vector parameters $z$ converge to $(0,0)$ starting from the initial condition $z(0)=(-2,4)$ and plant output converges to $y^*=2$.}
\label{xyconvergency}
\end{center}
\end{figure}

\begin{figure}[!htb]
\begin{center}
\includegraphics[width=.43\textwidth]{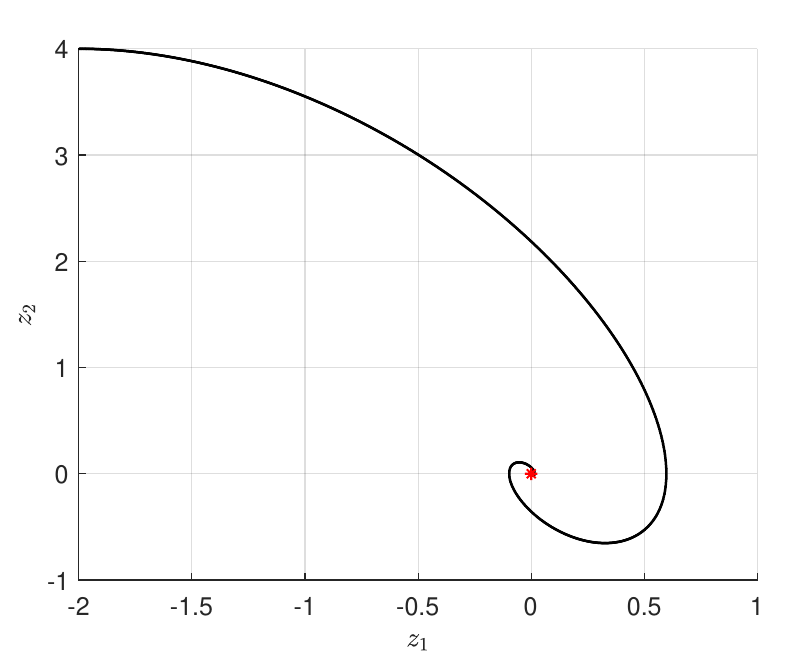}
\caption{Phase portrait shows the convergence to the equilibrium point marked with a red asterisk), starting from the initial condition $z_0=(-2, 4)$.}
\label{planodefase}
\end{center}
\end{figure}

The switching of control signals $u_1$ and $u_2$ is illustrated in Figure \ref{ctrlanddirection}. These signals have gain defined by the modulation function (\ref{funcmodgen}). Furthermore, high-frequency switching is notorious, which could cause the unwanted phenomenon called ``\textit{chattering}''. However, the system (\ref{dinamicalinear})--(\ref{subsist1}) receives the filtered control signals $z$. In the same figure you can observe the behavior of the cyclic search function ($\sigma_1$ and $\sigma_2$), where the last two staggered search periods are highlighted, with the appropriate scaling. It is important to observe alternation in the activation in each half cycle.

\begin{figure}[!htb]
\begin{center}
\includegraphics[width=.43\textwidth]{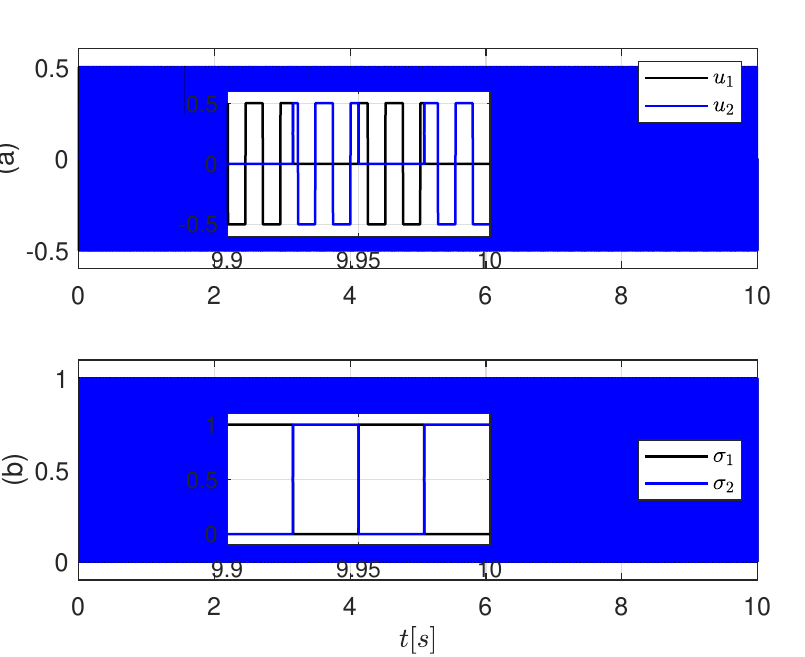}
\caption{(a) control signals $u_1$ and (b) the cyclical search direction $\sigma_1$ and $\sigma_2$, with period $T_s=5 s$ scaled by $\eta=0.01$.}
\label{ctrlanddirection}
\end{center}
\end{figure}

Figure \ref{fig3dimension} illustrates the representation of the objective function in three-dimensional space, indicating the output trajectories for two different initial conditions, $z_0=~(-2, 4)$ in black and $z_0=~(0, 5)$ in blue. Notably, both converge to $y^*=2$.

\begin{figure}[!htb]
\begin{center}
\includegraphics[width=.42\textwidth]{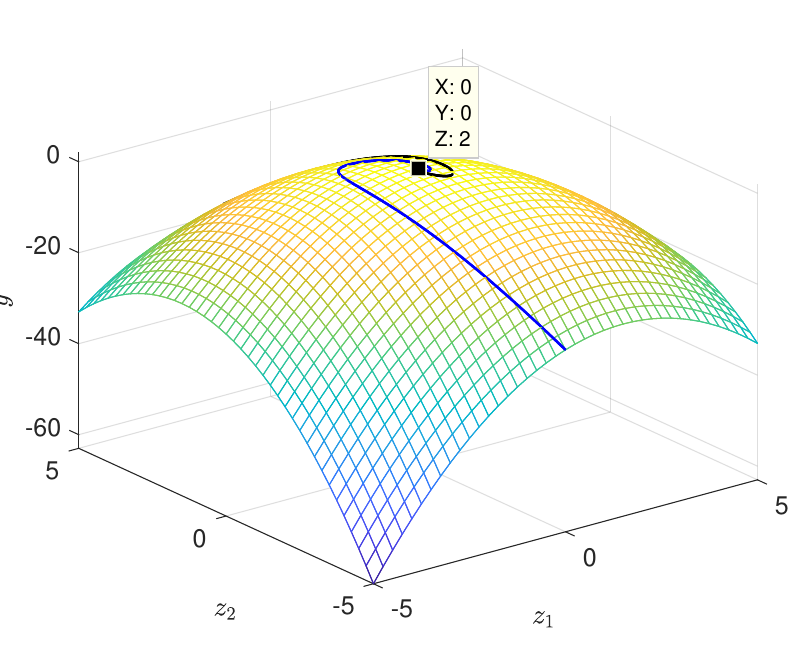}
\caption{Output trajectories going to the optimal point $y^*=2$, from two initial conditions $z_0=~(-2, 4)$ and $z_0=~(0, 5)$.}
\label{fig3dimension}
\end{center}
\end{figure}

\break
\newpage
\section{Conclusions}                                       \label{section5}

In this paper, a multivariable extremum seeking control strategy based on a periodic switching function and cyclic search for linear dynamic maps with arbitrary relative degree was presented. Relative degree is mitigated through temporal scaling. A convex and non-linear objective function was considered. The resulting approach guarantees global convergence of the system output to a small neighborhood of the extremum using only output feedback. Through simulation, it was possible to demonstrate the performance of the proposed real-time extremum seeking control system.

%


\end{document}